\def\Z{\mathbb Z}
\theoremstyle{plain}
\newtheorem*{Morley}{Morley's Congruence}
\newtheorem*{Lehmer}{Lehmer's Congruence}
\newtheorem*{Sylvester}{Sylvester's Congruence}
\newtheorem*{Wohlstenholme1}{Wolstenholme's theorem}
\newtheorem*{Wohlstenholme2}{Wolstenholme's congruence}
\newtheorem*{lemma}{Lemma}
\newtheorem*{corollary}{Corollary}
\newtheorem*{remark}{Remark}
\theoremstyle{definition}
\begin{document}
\title{Sylvester's, Wolstenholme's, Morley's and Lehmer's congruence theorems revisited }

\author{Christian Aebi, Grant Cairns}


\address{Coll\`ege Calvin, Geneva, Switzerland 1211}
\email{christian.aebi@edu.ge.ch}
\address{Department of Mathematics, La Trobe University, Melbourne, Australia 3086}
\email{G.Cairns@latrobe.edu.au}

\maketitle

In commemoration of J. Wolstenholme's article published 150 years ago \cite[1862]{Wo}, we present here below elementary proofs of famous congruences of that period such as Sylvester's and Wolstenholme's and continue by deducing Morley's and Lehmer's congruences. Throughout this paper, largely inspired by \cite{CA},  $p$ always denotes an odd prime and by a slight abuse of notation, $\frac1i$ denotes the multiplicative inverse of $i$ modulo
$p$ or modulo $p^2$, according to the context. For convenience, we
introduce the following symbols:
\[
S=\sum_{0<i<p}\frac{1}{i} \quad , \quad S_a=\sum_{\substack{{0<i<p}\\{i\equiv a \pmod 2}}}\frac{1}{i}
\quad\text{and}\quad S_{ab}=\sum_{\substack{{0<i<j<p}\\{i\equiv a,
j \equiv b  \pmod 2}}}\frac{1}{ij}
\]

\section*{Sylvester's congruence}

Let $q=\frac{2^{p-1}-1}{p}$ be the Fermat quotient to base 2. We start by observing that  the binomial coefficient
$\binom{p}{k}=\frac{p}{k}\binom{p-1}{k-1}\equiv (-1)^{k-1}\frac{p}{k}\pmod{p^2}$ to deduce that
\[
2^p = (1+1)^p=2+\sum_{k=1}^{p-1}\binom{p}{k}\equiv 2+p(S_1 - S_0)\pmod{p^2}
\]
Subtracting 2, dividing by $p$ and then adding $0\equiv
S_0+S_1\pmod{p}$ on both sides of the equivalence
yields
$2q\equiv 2S_1\equiv -2S_0\pmod{p}$. So we have proved

\begin{Sylvester}\cite[1861]{Sy} If $p>2$ then $2S_0\equiv \sum_{0<i<\frac{p-1}{2}}\frac{1}{i}\equiv- 2q \pmod{p}\quad \verb"(SC)"$
\end{Sylvester}

\section*{Wolstenholme's congruence and theorem}

\begin{Wohlstenholme1}\cite[1862]{Wo}
If  $p>3$ then $S \equiv 0 \pmod {p^2}$.
\end{Wohlstenholme1}

\begin{proof} Working in $\Z_p$, in which $i^2\equiv(p-i)^2$ we obtain the following congruences
\small{\small{\begin{align*}
2S \equiv
2\sum_{i=1}^{\frac{p-1}{2}}\frac{1}{i}+\frac{1}{p-i} \equiv
2p\sum_{i=1}^{\frac{p-1}{2}}\frac{1}{i(p-i)} \equiv
2p\sum_{i=1}^{\frac{p-1}{2}}\frac{-1}{i^2} \equiv 
-2p\sum_{i=1}^{\frac{p-1}{2}}i^2 \equiv 
-p\sum_{i=1}^{p-1}i^2 \equiv 
-p\frac{(p-1)p(2p-1)}{6}
\end{align*}}}
\end{proof}

Before proving Wolstenholme's congruence we observe that:
\begin{remark}
\qquad $2\sum_{1\leq
i<j}^{p-1}\frac{1}{ij}=S^2-\sum_{i=1}^{p-1}\frac{1}{i^2}$
\end{remark}

\begin{Wohlstenholme2}\cite[1862]{Wo}
If $p>3$ then $\binom{2p-1}{p-1} \equiv1\pmod{p^3}$
\end{Wohlstenholme2}

\begin{proof}
We start by developing the binomial coefficient in $\Z_{p^3}$, then
use the previous Remark  and conclude with Wolstenholme's theorem.
\begin{align*}
\binom{2p-1}{p-1}&=(-1)^{p-1}\frac{(1-2p)(2-2p)\ldots((p-1)-2p)}{1\cdot 2 \ldots (p-1)}=\prod_{i=1}^{p-1}\left(1-\frac{2p}{i}\right)\\
&\equiv 1-2pS+4p^2\left(\sum_{1\leq
i<j}^{p-1}\frac{1}{ij}+\sum_{i=1}^{p-1}\frac{1}{i^2}\right)\\
&\equiv 1-2pS+4p^2S^2+2p^2\sum_{i=1}^{p-1}\frac{1}{i^2}\equiv
1\pmod{p^3}
\end{align*}
\end{proof}

\section*{Morley's congruence}
\begin{Morley}\cite[1895]{Morl} If $p>3$, then
$(-1)^{(p-1)/2} \left(\smallmatrix p-1\\
\frac{p-1}{2} \endsmallmatrix\right) \equiv 4^{p-1}
\pmod {p^3}.$
\end{Morley}
Notice the right side of the above congruence may also be written $(1+pq)^2$.
\begin{lemma}\label{L1}\
\begin{enumerate}
\item $S_0 \equiv -S_1 \pmod {p^2}$
\item $S_{0}^{2} \equiv -S_{01}-S_{10}\pmod {p}$
\item $S_{00} \equiv S_{11} \pmod {p}$
\item $2S_{00} \equiv -S_{01} \pmod {p}$
\end{enumerate}
\end{lemma}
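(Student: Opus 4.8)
The plan is to prove the four congruences in Lemma~\ref{L1} using the notation $S_a$ and $S_{ab}$ together with the parity-pairing trick $i \leftrightarrow p-i$ that converts even indices to odd ones (and vice versa) modulo $p$. The key observation throughout is that if $0<i<p$ is odd, then $p-i$ is even, so the involution $i\mapsto p-i$ gives a bijection between the odd and even residues in $\{1,\dots,p-1\}$; working modulo $p$ one has $\frac{1}{p-i}\equiv -\frac1i$.

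For part~(a), I would work modulo $p^2$ and compute $S_0+S_1 = S = \sum_{0<i<p}\frac1i$. Pairing $i$ with $p-i$ gives $\frac1i+\frac1{p-i}=\frac{p}{i(p-i)}$, which is divisible by $p$; hence $S\equiv 0 \pmod{p}$ trivially, but more is needed. The cleanest route is to note that $S_0=\sum_{i\text{ even}}\frac1i$ and, since $i\mapsto p-i$ sends evens to odds bijectively, $S_0\equiv\sum_{i\text{ odd}}\frac{1}{p-i}=\sum_{i\text{ odd}}\frac{1}{-i+p}$; expanding $\frac{1}{-i+p}\pmod{p^2}$ as $-\frac1i\bigl(1-\frac{p}{i}\bigr)^{-1}\equiv -\frac1i-\frac{p}{i^2}$ and invoking Wolstenholme's theorem (which gives $\sum\frac{1}{i^2}\equiv 0\pmod p$, the content of the displayed computation in the proof of Wolstenholme's theorem) yields $S_0\equiv -S_1\pmod{p^2}$.

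Part~(b) is essentially the squaring identity from the Remark applied to a single parity class. Expanding $S_0^2=\bigl(\sum_{i\text{ even}}\frac1i\bigr)^2$ and separating diagonal from off-diagonal terms gives $S_0^2=2S_{00}+\sum_{i\text{ even}}\frac1{i^2}$; then I would use the parity involution together with $\sum_{i}\frac{1}{i^2}\equiv 0\pmod p$ to rewrite the diagonal sum and the cross terms $S_{01}+S_{10}$, arriving at the stated congruence modulo $p$. Parts~(c) and~(d) follow the same pattern: for~(c), applying $i\mapsto p-i,\ j\mapsto p-j$ to the sum defining $S_{00}$ swaps the parity labels and sends $\frac{1}{ij}$ to $\frac{1}{ij}\pmod p$, so $S_{00}\equiv S_{11}$ after bookkeeping the order constraint $i<j$; for~(d), I would combine the squaring identities for $S$ and for $S_0,S_1$ with parts~(a)--(c) to express $S_{01}$ in terms of $S_{00}$.

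The main obstacle I anticipate is part~(a), because it is the only one demanding a congruence modulo $p^2$ rather than $p$, so the second-order term $-\frac{p}{i^2}$ in the expansion of $\frac{1}{p-i}$ cannot be discarded and must be shown to vanish via $\sum\frac1{i^2}\equiv 0\pmod p$; the remaining parts are modulo $p$ and reduce to careful but routine reindexing and the diagonal/off-diagonal splitting of squares. I would therefore prove~(a) first and in the most detail, then treat~(b)--(d) as applications of the same involution combined with the already-established Wolstenholme and squaring identities.
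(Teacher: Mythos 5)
Your plans for (a), (b), (c) can be repaired, but part (d) has a genuine gap: the tools you propose cannot prove it. Every identity in your toolbox is symmetric under exchanging $S_{01}$ and $S_{10}$: the cross-term expansion gives only $S_0S_1=S_{01}+S_{10}$, the Remark gives only the combination $S_{00}+S_{01}+S_{10}+S_{11}$, and parts (a), (b), (c) involve $S_{01}$ and $S_{10}$ only through their sum, or not at all. Hence the most these identities can yield is the symmetric relation $S_{01}+S_{10}\equiv -2S_{00}\pmod p$. The target $S_{01}\equiv -2S_{00}\pmod p$ is strictly stronger: it additionally needs $S_{10}\equiv 0\pmod p$, and that cannot be an algebraic consequence of symmetric premises. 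Concretely, for $p=5$ one computes $S_{00}\equiv 2$, $S_{01}\equiv 1$, $S_{10}\equiv 0\pmod 5$; swapping the values of $S_{01}$ and $S_{10}$ leaves every one of your premises true but makes (d) false, so no manipulation of those premises can establish (d). The paper supplies exactly the missing asymmetric ingredient: in $S_{00}$ replace $\frac1j$ by $-\frac1{p-j}$ (the involution applied to the \emph{second index only}), reindex the resulting sum as $-\sum\frac{1}{k(l-k)}$ over odd $k<l$, and apply the partial-fraction identity $\frac{1}{k(l-k)}=\frac{1}{kl}+\frac{1}{(l-k)l}$ to obtain $S_{00}\equiv -S_{11}-S_{01}\pmod p$, which combined with (c) gives (d). Some step of this kind, treating the two indices differently, is indispensable and is absent from your proposal.

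Two smaller repairs. In (a) you expand $\frac{1}{p-i}\equiv -\frac1i-\frac{p}{i^2}\pmod{p^2}$ over odd $i$, so the error term you must kill is $p$ times the sum of $\frac{1}{i^2}$ over \emph{odd} $i$ only; the congruence you cite (vanishing of the full sum $\sum_{i=1}^{p-1}\frac{1}{i^2}$ modulo $p$) is not literally that statement. It does follow, since the involution $i\mapsto p-i$ shows the odd and even parts of $\sum\frac{1}{i^2}$ are congruent modulo $p$, hence each is half of the vanishing total, but this step must be said. Alternatively, note (as the paper does) that (a) is immediate: $S_0+S_1=S\equiv 0\pmod{p^2}$ is exactly Wolstenholme's theorem, proved just before the Lemma, so no expansion is needed at all. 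In (b), the diagonal/off-diagonal splitting you describe gives $S_0^2\equiv 2S_{00}\pmod p$, which is a different (true) identity, not the one claimed; the intended one-line argument is $S_0^2\equiv S_0\cdot(-S_1)=-(S_{01}+S_{10})\pmod p$, using (a) and the fact that the product $S_0S_1$ splits exactly into the two ordered cross sums. Part (c) is fine and coincides with the paper's argument.
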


\begin{proof}
(a) follows  from Wolstenholme's theorem, since $S_0 + S_1 =
\sum_{i=1}^{p-1}\frac{1}{i}\equiv 0 \pmod{p^2}$.\\
Concerning (b),\quad $S_{0}^2\equiv S_0\cdot(-S_1)\equiv -S_{01}-S_{10}\equiv -S_{01} \pmod{p}$\\
As regards (c),
\begin{equation*}
S_{00}\equiv \sum_{\substack{{0<i<j<p}\\{i\equiv 0, j \equiv 0 \pmod
2}}}\frac{(-1)(-1)}{(p-i)(p-j)}\equiv
\sum_{\substack{{0<l<k<p}\\
{l\equiv 1, k \equiv 1 \pmod 2}}}\frac{1}{lk}\equiv S_{11} \pmod{p}.
\end{equation*}
Finally for (d),
\begin{align*}
S_{00}&\equiv -\sum_{\substack{{0<i<j<p}\\{i\equiv 0, j \equiv 0
\pmod 2}}}\frac{1}{i(p-j)}\equiv
-\sum_{\substack{{0<i<i+k<p}\\{i\equiv 0, k \equiv 1 \pmod
2}}}\frac{1}{ik}\equiv -\sum_{\substack{{0<k<l<p}\\{k\equiv 1, l
\equiv 1 \pmod 2}}}\frac{1}{k(l-k)}\\
&\equiv -\sum_{\substack{{0<k<l<p}\\{j\equiv 1, k \equiv 1 \pmod
2}}}\left(\frac{1}{kl} + \frac{1}{(l-k)l}\right) \equiv
-S_{11}-\sum_{\substack{{0<m<l<p}\\{m \equiv 0, l \equiv 1 \pmod
2}}}\frac{1}{ml}\equiv -S_{11}-S_{01}\pmod{p}.
\end{align*}
\end{proof}
We now turn to the terms in Morley's theorem. Since

\begin{equation*}
2^p=\sum_{i=0}^p
\binom{p}{i}=2+\sum_{i=1}^{p-1}\frac{p}{i}\binom{p-1}{i-1}=2
+\sum_{i=1}^{p-1}\frac{p}{i}(-1)^{i-1}\left(1-\frac{p}{1}\right)
\left(1-\frac{p}{2}\right)\ldots\left(1-\frac{p}{i-1}\right)
\end{equation*}
we develop the product and apply (a) and (c)
\begin{align*}
2^{p-1}&\equiv 1-\frac{p}2\cdot\sum_{i=1}^{{p-1}} \frac{(-1)^{i}}i+
\frac{p^2}2\cdot\sum_{1\leqslant j< i<p} \frac
{(-1)^{i}}{ij}\\
&\equiv1-\frac{p}{2}(S_0-S_1)+\frac{p^2}{2}(S_{00}-S_{01}+S_{10}-S_{11})
\equiv 1 -pS_0 - \frac{p^2}{2}(S_{10}-S_{01})\pmod{p^3}.
\end{align*}
By squaring and applying (b) and (d) we obtain\\
\begin{align*}
4^{p-1}\equiv 1-2pS_0 + p^2(S^2_0-S_{01}+S_{10})&\equiv 1 -2pS_0+p^2(-S_{01}-S_{10}-S_{01}+S_{10} )\pmod{p^3}\\
&\equiv 1-2pS_0 + 4p^2S_{00} \pmod{p^3}
\end{align*}
On the other hand, the development of the middle binomial coefficient gives
\begin{align*}
(-1)^\frac{p-1}{2}\binom {p-1}{\frac{p-1}{2}}&\equiv
\left(1-\frac{p}{1}\right)\left(1-\frac{p}{2}\right)\ldots\left(1-\frac{p}{\frac{p-1}{2}}\right)\\
&\equiv 1-p\sum_{i=0}^{\frac{p-1}{2}}\frac{1}{i} + p^2\sum_{1\leq
j\leq i\leq \frac{p-1}{2}} \frac{1}{ij}\\
&\equiv 1 -2pS_0 + 4p^2S_{00} \pmod{p^3}
\end{align*}
which is the desired result.

\section*{Lehmer's congruence}
Using the above we prove that Morley's and Lehmer's congruences are equivalent.
\begin{Lehmer}\cite[1938
pg 358]{Lehmer}
$2S_0\equiv \sum_{i=1}^{\frac{p-1}{2}}\frac{1}{i}\equiv-2q+pq^2\pmod{p^2}$.
\end{Lehmer}

\begin{proof}
Notice that Morley's congruence can be written $(1+pq)^2\equiv 1 -2pS_0 + 4p^2S_{00} \pmod{p^3}$. Therefore, expanding the left side of the congruence, subtracting 1 and  dividing by $p$ gives $2q+pq^2\equiv-2S_0+4pS_{00}\pmod{p^2}$. Then by the Remark and \verb"SC" we find that
\[
4S_{00}\equiv \sum_{0\le i<j<\frac{p-1}{2}}\frac{1}{ij}\equiv \left(  \sum_{0\le i<\frac{p-1}{2}}\frac{1}{i}\right)^2 \equiv 2q^2 \pmod{p}
\]
which replaced above allows us to conclude.
\end{proof}

In view of our results we reconsider the previous Lemma and obtain finally

\begin{corollary}
\begin{enumerate}
\item $S_0 \equiv -S_1 \equiv -q+\frac{1}{2}pq^2\pmod {p^2}$
\item $S_{00} \equiv S_{11}\equiv \frac{1}{2}q^2 \pmod {p}$
\item $S_{01}\equiv -2S_{00}  \equiv -q^2\pmod {p}$
\item $S_{10} \equiv -S_{01}-S_{0}^{2}\equiv 0\pmod {p}$
\end{enumerate}
\end{corollary}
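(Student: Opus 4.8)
The plan is to read off all four statements directly from the Lemma together with Sylvester's, Lehmer's and Wolstenholme's congruences; no genuinely new computation is required beyond quantities that already appear in the proof of Lehmer's congruence. The idea is simply to combine the qualitative identities among the $S_{ab}$ furnished by the Lemma with the explicit values of $S_0$ and $S_{00}$ in terms of the Fermat quotient $q$, being careful to track which congruences hold modulo $p$ and which hold modulo $p^2$.

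For (a), the first congruence $S_0 \equiv -S_1 \pmod{p^2}$ is precisely Lemma (a), and the value $S_0 \equiv -q + \frac{1}{2}pq^2 \pmod{p^2}$ is obtained from Lehmer's congruence $2S_0 \equiv -2q + pq^2 \pmod{p^2}$ by dividing through by $2$, which is invertible modulo $p^2$. For (b), the first congruence $S_{00} \equiv S_{11} \pmod p$ is Lemma (c), while the value $S_{00} \equiv \frac{1}{2}q^2 \pmod p$ is already exhibited inside the proof of Lehmer's congruence, where $4S_{00} \equiv 2q^2 \pmod p$ was established.

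For (c), the relation $S_{01} \equiv -2S_{00} \pmod p$ is just Lemma (d) rewritten, and substituting the value of $S_{00}$ from (b) gives $S_{01} \equiv -q^2 \pmod p$. For (d), the relation $S_{10} \equiv -S_{01} - S_0^2 \pmod p$ is Lemma (b) rearranged; reducing (a) modulo $p$ (equivalently, invoking Sylvester's congruence) yields $S_0 \equiv -q \pmod p$, hence $S_0^2 \equiv q^2 \pmod p$, and combining this with $S_{01} \equiv -q^2$ from (c) gives $S_{10} \equiv q^2 - q^2 \equiv 0 \pmod p$. There is no real obstacle here, since the corollary is essentially a repackaging of earlier results; the only point to watch is the change of modulus, as (a) holds modulo $p^2$ but (b)--(d) only modulo $p$, so one must reduce $S_0$ modulo $p$ before squaring it in part (d).
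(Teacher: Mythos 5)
Your proposal is correct and is exactly the derivation the paper intends: the paper gives no explicit proof of the Corollary, stating only that it follows by reconsidering the Lemma ``in view of our results,'' which means combining Lemma (a)--(d) with Lehmer's congruence, the intermediate fact $4S_{00}\equiv 2q^2 \pmod p$ from its proof, and Sylvester's congruence, precisely as you do. Your care in reducing $S_0$ modulo $p$ before squaring in part (d) is the right point to watch, and your argument handles it correctly.
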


\textbf{Acknowledgments}. We address our thanks to the anonymous
 referee who restructured our initial proof of Morley's congruence theorem \cite{CA} by introducing the $S$ notation. 

\bibliographystyle{amsplain}

\end{document}